\documentclass[12pt]{article}


\usepackage{physics} 
\usepackage{enumerate} 
\usepackage{pgfplots}
\usepackage{pgfplotstable}
\usepackage{tikz,pgfplots}
\usepackage{amsmath}  
\usepackage{amssymb}
\usepackage{amsthm}
\usepackage{graphicx}
\usepackage{enumerate}
\usepackage{epsfig}
\usepackage{graphics}
\usepackage{float}
\usepackage{subfigure}
\usepackage{multirow}
\usepackage{lineno}
\usepackage{fullpage}
\usepackage[normalem]{ulem} 
\usepackage{makeidx}
\usepackage{xspace}
\usepackage{wrapfig}
\usepackage{relsize}

\usepackage[cmtip,all]{xy}
\usepackage{mathtools}
\usepackage{tikz-cd}
\usepackage[new]{old-arrows}



\newtheorem{theorem}{Theorem}[section]

\theoremstyle{definition}
\newtheorem{definition}[theorem]{Definition}

\newtheorem{remark}[theorem]{Remark}

\numberwithin{equation}{section}
\allowdisplaybreaks
\makeindex

\usepackage{geometry}
 \geometry{
 a4paper,
 total={170mm,257mm},
 left=20mm,
 top=20mm,
 }

\pgfplotsset{compat=1.14}

\begin{document}

\title{An Extended Trace Formula for Vertex Operators}
\author{Mohammad Reza Rahmati, Gerardo Flores\\ \footnote{Mathematics Subject classification: 17B10, 20C32, 20G05, 22E60, 22E65, 22E70}
\footnote{Keywords: Fock Space, Infinite wedge representation, Infinite dimensional Lie algebras, Representation of Lie superalgebras, Vertex operator, Character formula.} 
\footnote{email: mrahmati@cimat.mx (M. R. Rahmati), gflores@cio.mx (G. Flores)}
} 

\date{\today}  

\maketitle  



\begin{abstract} 
We present an extension of the trace of a vertex operator and explain a representation-theoretic interpretation of the trace. Specifically, we consider a twist of the vertex operator with infinitely many Casimir operators and compute its trace as a character formula. To do this, we define the Fock space of infinite level $\mathfrak{F}^{\infty}$. Then, we prove a duality between $\mathfrak{gl}_{\infty}$ and $\mathfrak{a}_{\infty}=\widehat{\mathfrak{gl}}_{\infty}$ of Howe type, which provides a decomposition of $\mathfrak{F}^{\infty}$ into irreducible representations with joint highest weight vector for $\mathfrak{gl}_{\infty}$ and $\mathfrak{a}_{\infty}$. The decomposition of the Fock space $\mathfrak{F}^{\infty}$ into highest weight representations provides a method to calculate and interpret the extended trace. 
\end{abstract}

\section{Introduction}\label{sec:intro}
Characters of representations of infinite-dimensional Lie algebras have been one of the most profound research subjects in the last few decades. Many questions in mathematics and physics can be 
expressed by the representation theory of infinite-dimensional Lie algebras. The character formulas in Lie theory can be interpreted in various concepts with increasing applications. Infinite wedge representation provides examples of the representation of infinite-dimensional Lie algebras that produces such interesting character formulas. The infinite wedge representation is a fundamental discrete structure on which many problems in theoretical quantum physics can be modeled.
It provides a general theoretic representation framework that supports different sampling problems in quantum theory. The infinite wedge representation is sometimes referred to as \textit{the fermionic Fock space}. Interesting character formulas can be extracted from specific operators acting on the Fock space trace that provide a powerful tool to study, for instamce, generating series, Faymann integrals, and probability amplitudes in mathematical physics, \cite{BKY, BO, CL, DMP, DVZ, Ma, Mat, NY1, St, Ve, VK, Wi, Ze}. 

A vertex operator is an operator of an infinite-dimensional Lie algebra that appears in the form of generating a series of operators. Vertex operators present a formalism for the linear action on specific infinite-dimensional vector spaces, such as the fermionic Fock space. The trace of the vertex operators is related to Schur functions and symmetric polynomials. In this context, the symmetric functions play a prominent role in connecting combinatorics to the representation theory of infinite-dimensional Lie algebras. The theory of vertex operators involves the representation theory of infinite symmetric group $S_{\infty}$ and the theory of symmetric polynomials; there are well-known formulas for the trace of vertex operators in terms of symmetric polynomials. 

The computation of the trace of the vertex operators $Tr ( q^{L_0} \exp ( \sum_n A_n \alpha_{-n}) \exp ( \sum_n B_n \alpha_n ) )$, is crucial in representation theory. The transformation beneath the trace is acting on common Fock space $\mathfrak{F}=\mathfrak{F}^1$, also called the infinite wedge representation. One can consider higher Casimir operators $L_j, \ j >0$ acting on the Fock space $\mathfrak{F}$. We follow a computation of Bloch-Okounkov \cite{BO} for the character of the infinite wedge representation, where a product formula is established for the character. Vertex operators appear in the context of string theory partition functions of CY 3-folds. In this case, the vertex operator is twisted by one or more Casimir operators, where the interest is to calculate its trace. We give a natural extension of the trace to the case where infinitely many Casimir operators appear in the trace function. Our idea is to use a decomposition of the Fock space of level infinity into irreducible highest weight representations of Lie superalgebra $\mathfrak{a}_{\infty} =\hat{\mathfrak{gl}}_{\infty}$. The decomposition breaks the trace into a sum of the traces on the irreducible components, \cite{CL, CW}.
\subsection{Contributions}  
Based on the computation of the trace formulas for $Tr ( q^{L_0} \exp ( \sum_n A_n \alpha_{-n}) \exp ( \sum_n B_n \alpha_n ) )$ and the Bloch-Okounkov result on the character formula for $Tr \left ( \exp(\sum_{j\geq 0} 2 \pi i L_j)\right )$, we propose to compute the following trace:
\begin{equation} \label{eq:trace}
Trace=Tr \left ( \exp(\sum_{j\geq 0} 2 \pi i L_j) \exp ( \sum_{n > 0} A_n \alpha_{-n}) \exp ( \sum_{n > 0} B_n \alpha_n ) \right ).
\end{equation}
Our approach is to define the Fock space of level $\infty$ denoted by $\mathfrak{F}^{\infty}$, as the natural generalization of the Fock space of finite level $l$. Then, we show the existence of a decomposition,
\begin{equation} \label{eq:decomposition}
\mathfrak{F}^{\infty}=\bigoplus_{\lambda}L(\mathfrak{gl}_{\infty}, \lambda) \otimes L(\mathfrak{a}_{\infty}, \Lambda(\lambda))
\end{equation}
where $\lambda$ runs overall generalized partitions, from which a character formula can be calculated. The $\lambda$ summand is a joint-highest weight representation of $\mathfrak{gl}_{\infty}$ and $\mathfrak{a}_{\infty}$. We shall interpret the trace in \eqref{eq:trace} as the trace of an operator acting on the Fock space $\mathfrak{F}^{\infty}$. The decomposition above in \eqref{eq:decomposition} gives a way to express the trace formula as a sum of traces over highest weight representations of $\mathfrak{gl}_{\infty} \times \mathfrak{a}_{\infty}$, where we can conduct the computation directly.    
\subsection{Organization of the text} 
The remainder of this paper is as follows. Section \ref{sec:prelim} provides basic definitions on Fock spaces and infinite wedge representation. We also introduce the vertex operators and character formulas for the vertex operators. Section \ref{sec:problem} explains the problem that we are going to solve in the paper together with its motivation from physics. Section \ref{sec:main} contains the main contributions of the paper. Further, we add an application given in Section \ref{sec:app}. Some conclusions are given in Section \ref{sec:concl}. Finally, the appendix contains the definition of several infinite-dimensional Lie algebras that are important in this context. 
\section{Preliminaries} \label{sec:prelim}
\subsection{Infinite wedge representation}
The Fock space is an infinite-dimensional vector space that appears to represent certain infinite-dimensional Lie algebras. It provides a systematic framework to express generating a series of importance in Physics by the trace of vertex operators [acting on the Fock space]. It also plays a crucial role in string theory to explain the probabilistic amplitudes.
\begin{definition} \label{def:infinitewedge}
The half infinite wedge or fermionic Fock space $\mathfrak{F}$ defined by:
\begin{equation}
\mathfrak{F}=\bigwedge^{\frac{\infty}{2}}V= \bigoplus_{i_r \in 1/2 +\mathbb{Z}} \ \mathbb{C}. \ v_{i_1} \wedge v_{i_2} \wedge ..., \qquad i_j=i_{j-1}-1/2, \ j \gg 0,
\end{equation}
is the vector space spanned by the semi-infinite wedge product of a fixed basis of the infinite-dimensional vector space,
\begin{equation}
V =\sum_{i \in 1/2 +\mathbb{Z}}\mathbb{C}.v_i,
\end{equation}
i.e., the monomials $v_{i_1} \wedge v_{i_2} \wedge ...$ such that:
\begin{itemize}
\item $i_1 >i_2>...$
\item $i_j=i_{j-1}-1/2$ for $j \gg 0$.
\end{itemize}
\end{definition}

Besides, we have the creation and annihilation operators defined by:
\begin{equation}
\begin{aligned}
\psi_k:&v_{i_1} \wedge v_{i_2} \wedge ... \mapsto v_k \wedge v_{i_1} \wedge v_{i_2} \wedge ...\\
\psi_k^*:&v_{i_1} \wedge v_{i_2} \wedge ... \mapsto (-1)^l v_{i_1} \wedge v_{i_2} \wedge ...\wedge \widehat{v_{i_l=k}} \wedge ...
\end{aligned}
\end{equation}

The monomials can be parametrized in terms of partition 
\begin{equation}
|\lambda \rangle =v_{\lambda}=\lambda_1-1/2 \wedge \lambda_2 -3/2 \wedge ...
\end{equation}

The Fock space $\mathfrak{F}$ is almost a Hilbert space, with respect to the inner product $\langle \ v_{\lambda}, \ v_{\mu} \ \rangle =\delta_{\lambda \mu}$. Its completion with respect to the norm of the inner product is a Hilbert space. We can also write this using the Frobenius coordinates of partitions:
\begin{equation}
|\lambda \rangle =\prod_{i=1}^l\psi_{a_i}^*\psi_{b_i}|0 \rangle ,\qquad a_i=\lambda_i-i+1/2,\ b_i=\lambda_i^t-i+1/2,
\end{equation}
where $(a_1,...,a_l|b_1,...,b_l)$ are called the Frobenius coordinates of $\lambda$. The operator  
\begin{equation}
C = \sum_{k \in 1/2 + \mathbb{Z}} :\psi_k\psi_k^*:
\end{equation}
is called the charge operator, whose action on $\mathfrak{F}$ is:
\begin{equation}
C \left (v_{i_1} \wedge v_{i_2} \wedge ... \right ) = [(\sharp \  \text{present positive}\  v_i)
-(\sharp \  \text{missing negative}\  v_i)]v_{i_1} \wedge v_{i_2} \wedge ...
\end{equation}

The vectors of $0$-charge are characterized by being annihilated by $C$. Besides, the energy operator $H$ (Hamiltonian) is defined by
\begin{equation}
H = \sum_{k \in 1/2 + \mathbb{Z}} k :\psi_k\psi_k^*:
\end{equation}
and satisfies $Hv_{\lambda}= |\lambda|v_{\lambda}$. Now, let define the Boson operators:
\begin{equation}
\alpha_n=\sum_{k \in 1/2 +\mathbb{Z}} \psi_{k+n}\psi_k^*.
\end{equation}
They satisfy the commutation relations $[\alpha_n, \psi_k]=\psi_{k+n},\ [\alpha_n, \psi_k^*]=-\psi_{k-n}$. 

The fermionic Fock space $\mathfrak{F}$ is the Hilbert space generated by a pair of fermions $\psi^{\pm}(z)$ with components $\psi_r^{\pm}, \ r \in \frac{1}{2}+\mathbb{Z}$ satisfying the Clifford commutation relations,
\begin{equation}
[\psi_i^+,\psi_j^-]=\delta_{i,-j}, \qquad [\psi_i^+,\psi_j^+]=0, \qquad [\psi_i^-,\psi_j^-]=0.
\end{equation}
\subsection{Bosonic Fock space}
The Hilbert space $\mathfrak{F}$ can be constructed in two main forms which are isomorphic (Boson-Fermion correspondence). They are called Fermionic and Bosonic Fock spaces. The definition \ref{def:infinitewedge} is usually referred as the fermionic Fock space. The Bosonic Fock space is a representation defined on the polynomial ring $\mathbb{C}[x_1,x_2, ...;q,q^{-1}]$, see \cite{Ze}. It is not hard to write a specific isomorphism between $\mathfrak{F}$ and the mentioned polynomial ring. In this context, the vertex operators act as cetain differential operators, and the trace of vertex operator appears as generating a series of the ring of symmetric functions on infinitely many variables. We review some features of this below. 

Consider the coordinate ring of the affine variety $Sym^k(\mathbb{C})$, namely $B_k=\mathbb{C}[x_1,...,x_n]/S_n$. Write the Hilbert series of $B_k$ as $H_{B_k}(q)=\sum_n q^n h_n(B_k)$, where $h_n(B_k)=\natural \{\text{monomials in}\ B_k \ \text{of charge} \ k\}$. Let also $q$ act as $\mathbb{C}^{\times}$ on the other variables. We understand that $B_k$ is the ring of symmetric functions in variables $x_1,...,x_k$. It is also generated by Schur functions, i.e., $B_k=\big \langle \ S_{\mu}(x_1,...,x_k), \ |\mu| \leq k \ \big \rangle$. In string theory $B_k$ arises as the Hilbert space $\mathcal{H}_k$ generated by the Boson oscillators up to charge $k$, with commutation relations, $[\alpha_n,\alpha_m]=n\delta_{n+m,0}$. 

Let us associate monomials to partitions as follows:
\begin{equation}
\lambda=1^{\lambda_1}2^{\lambda_2}.... \longmapsto \alpha_{-1}^{-\lambda_1}\alpha_{-2}^{-\lambda_2}....|0 \rangle ,
\end{equation}
thus,
\begin{equation}
B_k \cong \mathcal{H}_k=\left \langle  \ \alpha_{-1}^{m_1}...\alpha_{-k}^{m_k} |0 \rangle \ | \ m_1,...m_k \geq 0 \ \right \rangle,
\end{equation}
and we have the inclusions $\mathcal{H}_0 \subset \mathcal{H}_1 \subset ...$ corresponding to the nested sequence of Young diagrams of increasing number or rows. The $\mathbb{C}^{\times}$ induces an action of $Sym^*(\mathbb{C})$ such that the functions $S_{\mu}(x_1,...,x_k)$ are eigen-functions with eigenvalues $q^{|\mu|}$ of the action of $q^{L_0}$ on $\mathcal{H}$, where 
\begin{equation}
L_0=\sum_{n >0}\alpha_{-n}\alpha_n.
\end{equation}
Then, we can write:
\begin{equation}
H_{B_k}(q)=Tr_{\mathcal{H}_k}q^{L_0}=\sum_{|\mu| \leq k}q^{|\mu|}=\prod_{n=1}^k(1-q^n)^{-1}.
\end{equation}
The generating function of these series becomes:
\begin{equation}
G(t,q)=\sum_{n=0}^{\infty}t^kH(B_k)(q)=\sum_kt^kTr_{\mathcal{H}_k}q^{L_0}=\sum_{\mu}s_{\mu}(t)s_{\mu}(1,q,...).
\end{equation}
We can generalize this argument by considering the coordinate ring of the affine variety $B_{k_1,...,k_n}=Sym^{k_1}(\mathbb{C}) \times ...\times Sym^{k_n}(\mathbb{C})$, and define the generating series $G(t_1,...,t_n;q)=\sum_{k_1,...,k_n}t_1^{k_1}...t_n^{k_n}H_{B_{k_1,...,k_n}}(q)$. The ring $B_{k_1,...,k_n}$ is generated by the product of the Schur polynomials $B_{k_1,...,k_n}=\big \langle S_{\mu_1}(x_{1,1},...,x_{1,k_1}). \  . \dots . \ S_{\mu_n}(x_{n,1},...,x_{n,k_n}); \ l(\mu_j) \leq k_j\big \rangle$. The above ring is isomorphic to the Hilbert space spanned by the Bosonic operators,
\begin{equation}
\mathcal{H}_{k_1,...,k_n}=\left \langle \prod_{j=1}^n (\alpha_{-1}^j)^{n_{j,1}}...(\alpha_{-k_n}^j)^{n_{j,k_n}} |0 \rangle  \right \rangle .
\end{equation} 

In the new terminology, the Hilbert series can be written as:
\begin{equation}
H_{B_{k_1,...,k_n}}(q)=Tr_{\mathcal{H}_{k_1,...,k_n}}q^{L_0},
\end{equation}
where $L_0=\sum_{j=1}^n\sum_{r >0}\alpha_{-r}^j\alpha_r^j$ is the charge operator. We obtain $G(t_1,...,t_n,q)=\prod_{j=1}^nG(t_j,q)=\prod_{j=1}^n\prod_i(1-q^{-i}t_j)$. The classical Boson-Fermion correspondence is an isomorphism between two representations of the Heisenberg algebra, namely \textit{the Bosonic Fock space} and \textit{the fermionic Fock space}. The Boson-Fermion correspondence is a basic result in mathematical physics. There are various applications of this correspondence. It provides an explicit way of comparing expressions for $q$-dimensions of representations, through which new combinatorial identities are derived by computing
characters of representations in two different ways. 
\subsection{The Fock space of level $l$} \label{def:Fock-spacefinite}
The Fock space of level $l$ denoted by $\mathfrak{F}^l$ is the Fermionic Fock spaces on $l$ pairs of fermions $\psi_r^{\pm,j}, r \in 1/2+\mathbb{Z}, \ j=1,...,l$. Let denote $\widehat{C}^l$ be the Clifford algebra on the components. $\mathfrak{F}^l$ is a simple $\widehat{C}^l$-module generated by $|0 \rangle$, such that $\psi_r^{\pm,j}|0 \rangle =0, \ r >0$. The $\frac{1}{2} \mathbb{Z}_+$-gradation of $\mathfrak{F}^l$ is given by the eigenvalues of the degree operator $d$ such that $d|0 \rangle=0, \ [d, \psi_{-r}^{\pm,j}]=r\psi_{-r}^{\pm,j}$, where any graded subspace is finite-dimensional. We shall use normal ordering notations defined by,
\begin{equation}
\begin{aligned}
:\psi_r^{+,j}\psi_s^{-,k}:&=\begin{cases}-\psi_s^{-,k}\psi_r^{+,j}, \qquad s=-r<0\\
\psi_r^{+,j}\psi_s^{-,k}\end{cases}\\
:\psi_r^{-,j}\psi_s^{+,k}:&=\begin{cases}-\psi_s^{+,k}\psi_r^{-,j}, \qquad s=-r<0\\
\psi_r^{-,j}\psi_s^{+,k}\end{cases}\\
:\psi_r^{+,j}\psi_s^{+,k}:&=\psi_r^{+,j}\psi_s^{+,k}\\
:\psi_r^{-,j}\psi_s^{-,k}:&=\psi_r^{-,j}\psi_s^{-,k}\\
:\psi_r^{\pm,j}\phi_s^{+,k}:&=\psi_r^{\pm,j}\phi_s^{+,k}.
\end{aligned}
\end{equation}

Define the operators $e_{ij}^*$ and $e_*^{ij}$ by:
\begin{equation} \label{eq:e-ij}
e_{ij}^*=\sum_{k=1}^l:\psi_{1/2-i}^{+,k}\psi_{j-1/2}^{-,k}:, \qquad 
e_*^{ij}=\sum_{r \in 1/2 +\mathbb{Z}}:\psi_{-r}^{+,i}\psi_r^{-,j}:
\end{equation} 
With the above set-up, we have the following:
\begin{itemize} 
\item The map $\mathfrak{a}_{\infty} \longrightarrow  End(\mathfrak{F}^l), \ E_{ij} \longmapsto e_{ij}^*$ defines a representation of $\mathfrak{a}_{\infty}$. 
\item The map $\mathfrak{gl}_{l} \longrightarrow  End(\mathfrak{F}^l), \ E^{ij} \longmapsto e_*^{ij}$ is a representation of $\mathfrak{gl}_l$.
\item The above actions of $\mathfrak{gl}_{l}$ and $\mathfrak{a}_{\infty}$ on $\mathfrak{F}^l$ commute, [see \cite{CW} sec. 5.4]. Thus, we have the relation  
\begin{equation}
[e_{ij}^*,e_*^{rs}]=[\sum_{u \in 1/2+\mathbb{Z}}:\psi_{-u}^{-,i} \psi_{-u}^{-,j}:\ ,\ \sum_{l=1}^{\infty}:\psi_{1/2-r}^{+,l} \psi_{1/2-s}^{-,l}:]=0 .
\end{equation}
\end{itemize}

We use the generalized partitions as the weights of representations of $\mathfrak{gl}_l$, i. e., the partitions $\lambda$ of the form $\lambda=(\lambda_1 \geq \lambda_2 \geq... \geq \lambda_{i-1} \geq \lambda_i = ...=\lambda_{j-1} =0 \geq \lambda_j \geq ... \geq \lambda_r)$. Also, for the weight of representations of $\mathfrak{a}_{\infty}$ we denote $\Lambda(\lambda)=\Lambda_{\lambda_1}+...+\Lambda_{\lambda_l}=l\Lambda_0+\sum_i\lambda_i'\epsilon_i$, where,
\begin{equation}
\lambda_i'=\begin{cases} \ |\ \{\ j \ |\lambda_j \geq i,\ i \geq 1\ \}\ |\\ 
-|\ \{\ j \ | \lambda_j <i,\ i\leq 0 \ \} \ |\end{cases}.
\end{equation}
Notice that we are using the notation in Appendix A, in particular eq. \eqref{eq:weights-a}.

Let's define,
\begin{equation}
\begin{aligned}
\varpi_m^{+,j}=\psi_{-m+1/2}^{+,s}...\psi_{-3/2}^{+,s}\psi_{-1/2}^{+,s}\\
\varpi_m^{-,s}=\psi_{-m+1/2}^{-,s}...\psi_{-3/2}^{-,s}\psi_{-1/2}^{-,s}
\end{aligned}, \qquad \varpi_0^{+,s}=1,
\end{equation}
A joint highest weight vector in $\mathfrak{F}^l$ associated to $\lambda$ with respect to the standard Borel $\mathfrak{gl}(l)\times \mathfrak{a}_{\infty}$ is 
\begin{equation}
v_{\lambda}=\varpi_{\lambda_1}^{+,s}\varpi_{\lambda_2}^{+,s}...\varpi_{\lambda_i}^{+,s}\varpi_{-\lambda_j}^{-,s}\varpi_{-\lambda_{j+1}}^{+,s}...\varpi_{-\lambda_l}^{+,s},
\end{equation}
whose weight with respect to $\mathfrak{gl}(l)$ is $\lambda$, and with respect to $\mathfrak{a}_{\infty}$ is $\Lambda(\lambda)$, [see \cite{CW} sec. 5.4].  
\begin{theorem} [See \cite{CW} sec. 5.4] \label{thm:finite-decomposition}
We have the decomposition 
\begin{equation}
\mathfrak{F}^l=\bigoplus_{\lambda}L(\mathfrak{gl}(l), \lambda) \otimes L(\mathfrak{a}_{\infty}, \Lambda(\lambda)).
\end{equation}
\end{theorem}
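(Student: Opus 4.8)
The plan is to read Theorem~\ref{thm:finite-decomposition} as a Howe duality for the mutually commuting pair $(\mathfrak{gl}_l, \mathfrak{a}_\infty)$ acting on $\mathfrak{F}^l$ through $E^{ij}\mapsto e_*^{ij}$ and $E_{ij}\mapsto e_{ij}^*$, and to obtain the decomposition from a double-commutant argument supplemented by an explicit computation of the joint highest weight vectors. The structural input that makes this work is the realization of $\mathfrak{F}^l$ as a semi-infinite exterior power of $\mathbb{C}^l\otimes V$: the $l$ flavors supply the defining space $\mathbb{C}^l$ of $\mathfrak{gl}_l$, while the single-particle index $r\in 1/2+\mathbb{Z}$ supplies the space on which $\mathfrak{a}_\infty$ acts. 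In this picture the $\mathfrak{gl}_l$-action mixes flavors and the $\mathfrak{a}_\infty$-action is the diagonal one on all flavors, so the two actions commute and each preserves the $d$-grading into finite-dimensional pieces.

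First I would establish complete reducibility. Because $d$ grades $\mathfrak{F}^l$ into finite-dimensional eigenspaces and $\mathfrak{gl}_l$ is reductive and acts locally finitely, the $\mathfrak{gl}_l$-module is a union of finite-dimensional, hence semisimple, representations; the $\mathfrak{a}_\infty$-action is of integrable highest-weight type and is completely reducible in the relevant category. Granting the mutual-commutant property, the double-commutant theorem then forces a multiplicity-free decomposition $\mathfrak{F}^l=\bigoplus_\mu L(\mathfrak{gl}_l,\mu)\otimes L(\mathfrak{a}_\infty,\tau(\mu))$ for some injection $\mu\mapsto\tau(\mu)$, with the joint summands indexed by the one-dimensional spaces of vectors annihilated by both nilradicals. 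Next I would verify that these joint highest weight vectors are exactly the $v_\lambda=\varpi_{\lambda_1}^{+,s}\cdots\varpi_{-\lambda_l}^{+,s}$ written before the statement: using the Clifford relations and the vacuum conditions $\psi_r^{\pm,j}|0\rangle=0$ for $r>0$, a normal-ordering computation shows each $v_\lambda$ is killed by the raising operators $e_*^{ij}$ ($i<j$) of $\mathfrak{gl}_l$ and by the positive root vectors of $\mathfrak{a}_\infty$, has $\mathfrak{gl}_l$-weight $\lambda$, and has $\mathfrak{a}_\infty$-weight $\Lambda(\lambda)=l\Lambda_0+\sum_i\lambda_i'\epsilon_i$ with $\lambda_i'$ as defined above. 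Checking that $\lambda$ ranges over exactly the generalized partitions, and no others, identifies the index set and fixes $\tau(\lambda)=\Lambda(\lambda)$.

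The hard part will be the mutual-commutant and multiplicity-freeness step in this infinite-dimensional setting, where the naive double-commutant theorem needs justification and the passage from finite truncations to the centrally extended $\mathfrak{a}_\infty$ must be controlled. To secure it I would compare the graded character of $\mathfrak{F}^l$ computed two ways: directly over the $l$ free-fermion pairs via the boson-fermion correspondence, giving a product formula, and via the proposed right-hand side $\sum_\lambda \operatorname{ch}L(\mathfrak{gl}_l,\lambda)\,\operatorname{ch}L(\mathfrak{a}_\infty,\Lambda(\lambda))$. Agreement of the two expressions simultaneously rules out any missing $\mathfrak{gl}_l$-type and confirms that each pair $(\lambda,\Lambda(\lambda))$ occurs with multiplicity one, completing the proof. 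An alternative route that avoids the abstract commutant theorem is to prove the finite-rank skew $(\mathfrak{gl}_l,\mathfrak{gl}_N)$-duality on $\bigwedge(\mathbb{C}^l\otimes\mathbb{C}^N)$ first, where the second factor appears with the conjugate partition, and then take the stable limit $N\to\infty$, reinterpreting that conjugate as the $\mathfrak{a}_\infty$-weight $\Lambda(\lambda)$.
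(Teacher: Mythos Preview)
The paper does not supply its own proof of this theorem; it is stated with a citation to \cite{CW}, sec.~5.4, and the ingredients (the commuting actions $e_{ij}^*$, $e_*^{ij}$ and the explicit joint highest weight vectors $v_\lambda$) are simply laid out in the preceding paragraphs without being assembled into an argument. Your proposal matches both that surrounding discussion and the proof the paper does give for the analogous $\mathfrak{F}^\infty$ case (Theorem~\ref{th:duality-infinite}): define the two commuting actions, exhibit $v_\lambda$ as a joint highest weight vector of weight $(\lambda,\Lambda(\lambda))$, and conclude the multiplicity-free decomposition.

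Your treatment is in fact more careful than the paper's: the paper's proof of Theorem~\ref{th:duality-infinite} asserts multiplicity-freeness essentially by fiat (``as in the finite case''), whereas you supply the missing justification via complete reducibility from the $d$-grading, the double-commutant mechanism, and a character comparison. The alternative route you mention at the end---taking the stable $N\to\infty$ limit of the classical skew $(\mathfrak{gl}_l,\mathfrak{gl}_N)$-duality on $\bigwedge(\mathbb{C}^l\otimes\mathbb{C}^N)$---is a genuinely different and equally valid approach, and is in fact closer to how some treatments in the literature derive the result; the paper and \cite{CW} take the direct route you outline first.
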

We may state other kind of dualities by considering different vertex operators. In order to illustrate the fermionic operators, let $V=(\mathbb{C}^l \otimes \mathbb{C}^{\infty}) \bigoplus (\mathbb{C}^{l*} \otimes \mathbb{C}^{{\infty}*})$, where $\mathbb{C}^{\infty}$ is a vector space with basis $w_r, r \in -1/2-\mathbb{Z}_+$, the dual indexed is given by $w_{-r}$, and $\mathbb{C}^l$ has basis $v^{+,i}$ with dual $v^{-,i}$. Then, we may illustrate $\psi_r^{\pm,i}=v^{\pm,i} \otimes w_r$.
\subsection{The character of Vertex operators}
The operators of the form:
\begin{equation}
\Gamma_+(x)=\exp(\sum_{n \geq 1}\frac{x^n}{n}\alpha_n), \qquad \Gamma_-(x)=\exp(\sum_{n >0} \frac{x^n}{n}\alpha_{-n})
\end{equation}
are called \textit{vertex operators}. They are adjoint with respect to the natural inner product. We have a commutation relation:
\begin{equation} \label{eq:commutation}
\Gamma_+(x)\Gamma_-(y)=(1-xy)
\Gamma_-(y)\Gamma_+(x).
\end{equation}
Also, we have,
\begin{equation}
\Gamma_+(x) v_{\mu}=\sum_{\lambda \supset \mu}S_{\lambda/\mu}(x)v_{\lambda}.
\end{equation} 

Vertex operators provide powerful tools to express partitions. For example, we can write,
\begin{equation}
\begin{aligned}
\Gamma_+(1) |\mu \rangle =\sum_{\lambda \supset \mu}|\lambda \rangle\\
\Gamma_-(1) |\mu \rangle =\sum_{\lambda \subset \mu}|\lambda \rangle .
\end{aligned}
\end{equation}
Besides, we may write the McMahon function as:
\begin{equation}
\begin{aligned}
Z&=\sum_{\text{3-dim partitions}}q^{\natural \ boxes} = \langle (\prod_{t=0}^{\infty}q^{L_0}\Gamma_+(1)) q^{L_0} (\prod_{t=-\infty}^{-1}\Gamma_-(1)q^{L_0})\rangle\\
&=\langle \prod_{n>0}\Gamma_+(q^{n-1/2})\prod_{n>0}\Gamma_-(q^{-n-1/2}) \rangle.
\end{aligned}
\end{equation}
We may divide a 3-dimensional partition into slices of two-dimensional partitions, for instance, along the diagonals. In this way, the vertex operator divides into the multiplication of many vertex operators of the slices,
\begin{equation}
Z(\{x_m^{\pm}\})=\langle ... \prod_{u_i< m<v_{i+1}}\Gamma_+(x_m^-)\prod_{v_i< m<u_{i}}\Gamma_-(x_m^+)...\rangle = \langle \prod_{u_0< m<u_n}\Gamma_{-\epsilon(m)}(x_m^{\epsilon(m)})  \rangle.
\end{equation}
In this way, one can obtain product formulas such as 
\begin{equation}
Z(\{x_m^{\pm}\})\prod_{m_1 <m_2} (1-x_{m_1}^-x_{m_2}^+).
\end{equation}
For example, we may choose 
\begin{equation}
\begin{aligned}
\{x_m^{\pm}\}&=\{t^iq^{v_i}|i=1,2,...\}\\
\{x_m^{\pm}\}&=\{t^{j-1}q^{-v_i^t}|j=1,2,...\},
\end{aligned}
\end{equation}
and we get,
\begin{equation}
Z_{\lambda\mu\nu}(t,q)=\langle \prod_{u_0< m<u_n}\Gamma_{-\epsilon(m)}(x_m^{\epsilon(m)}) \rangle .
\end{equation}

The partition function can also be read by putting a wall on the distance $M$ along with one of the axis. Then, using commutation \eqref{eq:commutation} we have expressions of the form,
\begin{equation}
\begin{aligned}
Z&=\langle \prod_{0<m <\infty}\Gamma_-(x_m^+)\prod_{-M<m<0}\Gamma_+(x_m^-) \rangle  \\
&=\prod_{l_1=1}^{\infty}\prod_{l_2=1}^M(1-x_{l_1-1/2}^+x_{-l_2+1/2}^-)^{-1}(\langle \prod_{-M<m<0}\Gamma_+(x_m^-)\prod_{0<m<\infty}\Gamma_-(x_m^+) \rangle ,
\end{aligned}
\end{equation}
where the last factor in parentheses is equal to $1$, and we obtain a product formula. 

The following theorem is another instant of generating series which arise from the trace of vertex operator.
\begin{theorem} \cite{Ze}
We have the following formula for the trace of a vertex operator acting on $\mathfrak{F}=\bigwedge^{\frac{\infty}{2}}V$:
\begin{equation}
Tr \left ( q^{L_0} \exp ( \sum_n A_n \alpha_{-n}) \exp ( \sum_n B_n \alpha_n ) \right )= \prod_n \sum_{k}\sum_{l=0}^k\frac{n^lA_n^lB_n^l}{l!}q^{nk}{k \choose l} ,
\end{equation}
where $L_0$ is the charge operator, and $q^{L_0}| \lambda \rangle =|\lambda ||\lambda \rangle$.
\end{theorem}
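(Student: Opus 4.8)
The plan is to reduce the whole trace to a product of one-variable traces by exploiting the independence of the bosonic modes. Recall from the bosonic description of $\mathfrak{F}$ that the charge-zero sector is the polynomial ring generated by the creation operators $\alpha_{-n}$, $n>0$, subject to $[\alpha_n,\alpha_m]=n\delta_{n+m,0}$. Since operators attached to distinct modes commute, this sector factorizes as a tensor product $\bigotimes_{n>0}\mathcal{H}_n$, where $\mathcal{H}_n=\mathbb{C}[\alpha_{-n}]|0\rangle$ carries the monomial basis $\{\alpha_{-n}^k|0\rangle:k\geq 0\}$, and the state $\prod_n\alpha_{-n}^{m_n}|0\rangle$ corresponds to the partition $\lambda=1^{m_1}2^{m_2}\cdots$ of size $|\lambda|=\sum_n n m_n$. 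Because the $\alpha_{-n}$ mutually commute and likewise the $\alpha_n$, the two exponentials split as $\exp(\sum_n A_n\alpha_{-n})=\prod_n\exp(A_n\alpha_{-n})$ and $\exp(\sum_n B_n\alpha_n)=\prod_n\exp(B_n\alpha_n)$, while $L_0=\sum_{n>0}\alpha_{-n}\alpha_n$ is a sum of commuting single-mode operators $L_0^{(n)}=\alpha_{-n}\alpha_n$, so $q^{L_0}=\prod_n q^{L_0^{(n)}}$. The operator under the trace is thus a tensor product over modes, and the trace of a tensor product is the product of traces, giving
\begin{equation}
Tr\Big(q^{L_0}\exp(\textstyle\sum_n A_n\alpha_{-n})\exp(\sum_n B_n\alpha_n)\Big)=\prod_{n>0}Tr_{\mathcal{H}_n}\Big(q^{L_0^{(n)}}\exp(A_n\alpha_{-n})\exp(B_n\alpha_n)\Big).
\end{equation}

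Next I would carry out the single-mode computation. Fix $n$ and abbreviate $A=A_n$, $B=B_n$. On the basis $|k\rangle:=\alpha_{-n}^k|0\rangle$ one has the elementary relations $\alpha_{-n}|k\rangle=|k+1\rangle$, $\alpha_n^{\,j}|k\rangle=\frac{k!}{(k-j)!}\,n^j|k-j\rangle$, and $L_0^{(n)}|k\rangle=nk|k\rangle$, together with the norms $\langle k|k\rangle=k!\,n^k$ coming from $\alpha_{-n}^{\dagger}=\alpha_n$. Expanding the annihilation exponential gives $\exp(B\alpha_n)|k\rangle=\sum_{j=0}^{k}\binom{k}{j}B^j n^j|k-j\rangle$, after which $\exp(A\alpha_{-n})$ raises the index by an arbitrary amount $i\geq 0$ with weight $A^i/i!$. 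To read off the diagonal matrix element $\langle k|\cdots|k\rangle$ the raising amount must cancel the lowering amount, i.e.\ $i=j$, and then $q^{L_0^{(n)}}$ contributes $q^{nk}$. Collecting the surviving terms and dividing by the norm yields
\begin{equation}
\frac{\langle k|\,q^{L_0^{(n)}}\exp(A\alpha_{-n})\exp(B\alpha_n)\,|k\rangle}{\langle k|k\rangle}=q^{nk}\sum_{l=0}^{k}\frac{n^l A^l B^l}{l!}\binom{k}{l}.
\end{equation}

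Summing this over $k\geq 0$ produces the single-mode trace $Tr_{\mathcal{H}_n}=\sum_k\sum_{l=0}^{k}\frac{n^l A_n^l B_n^l}{l!}q^{nk}\binom{k}{l}$, and taking the product over $n>0$ reproduces the asserted formula. The main obstacle is not any single algebraic step but the bookkeeping around normalization and convergence. One must track the unnormalized norms $\langle k|k\rangle=k!\,n^k$ consistently, so that the trace $\sum_k\langle k|\cdots|k\rangle/\langle k|k\rangle$ is evaluated against a properly weighted basis; a stray factorial or power of $n$ here shifts the final coefficients. Equally, the identity ``trace of a tensor product equals the product of traces'' and the passage to the infinite product over $n$ must be justified. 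I would do this most cleanly by reading everything as a formal power series in $q$: since each contribution $q^{nk}$ with $n,k\geq 1$ has positive degree, only finitely many modes and finitely many values of $k$ enter any fixed power of $q$, so the product and all sums are well defined termwise; alternatively one imposes growth conditions on $A_n,B_n$ ensuring trace-class convergence.
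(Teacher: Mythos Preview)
Your argument is correct and follows essentially the same route as the paper: factorize the bosonic Fock space as $\bigotimes_{n}\bigoplus_{k}\alpha_{-n}^{k}|0\rangle$, split the operator mode by mode, expand the two exponentials, and pick out the diagonal matrix element on each $\alpha_{-n}^{k}|0\rangle$. Your treatment is in fact more careful than the paper's, which leaves the normalization $\langle k|k\rangle=k!\,n^{k}$ implicit and says nothing about convergence; your remarks on reading the identity in formal power series in $q$ are a welcome addition.
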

\begin{proof} 
Denote the operator in the trace by $T$. We have the isomorphism:
\begin{equation}
\bigwedge^{\frac{\infty}{2}}V= \bigotimes_{n=1}^{\infty} \bigoplus_{k=0}^{\infty} \alpha_{-n}^k|0 \rangle  ,
\end{equation}
which implies:
\begin{equation}
\begin{aligned}
Tr(T)&=\prod_{n=1}^{\infty} Tr \left ( T|_{\bigoplus_{k=0}^{\infty} \alpha_{-n}^k|0 \rangle} \right )\\
&= \prod_n \sum_k \langle 0|\alpha_{n}^k \Big | q^{L_0}e^{A_n\alpha_{-n}}e^{B_n\alpha_n} \Big | |\alpha_n^k|0 \rangle \rangle\\
&= \prod_n \sum_{k,l,m}\frac{A_n^lB_n^m}{l!m!}q^{n(l-m+k)} \langle 0|\alpha_{n}^k \Big | \alpha_{-n}^l\alpha_n^m \Big | |\alpha_n^k|0 \rangle \rangle\\
&= \prod_n \sum_{k,l}\frac{A_n^lB_n^l}{l!l!}q^{nk} \langle 0|\alpha_{n}^k \Big | \alpha_{-n}^l\alpha_n^l \Big | |\alpha_n^k|0 \rangle \rangle\\
&= \prod_n \sum_{k}\sum_{l=0}^k\frac{n^lA_n^lB_n^l}{l!}q^{nk}{k \choose l} .
\end{aligned}
\end{equation}
\end{proof}
In the following, we present another calculation due to Bloch, and Okounkov \cite{BO} of the trace of a representation on the infinite wedge space defined in \ref{def:infinitewedge}. This is the base of our main result \ref{th:main} given in Section \ref{sec:main}.
\begin{theorem} \cite{BO}
Let $\mathfrak{F}=\bigwedge^{\frac{\infty}{2}}V$ be the Fock space on a fixed basis of $V=\bigoplus_{j \in \mathbb{Z}} \mathbb{C}v_j$. Then, the character of $\mathfrak{F}$ is given by
\begin{equation}
ch(\mathfrak{F})= \prod_{n \geq 0} (1+y_0y_1^{n+1/2}y_2^{n+3/2)^2}....)(1+y_0^{-1}y_1^{n-1/2}y_2^{-(n-1/2)^2}....),
\end{equation}
where $y_j=e^{2\pi i \tau_j}$.
\end{theorem}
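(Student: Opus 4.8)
The plan is to exploit the fact that every operator $L_j$ occurring in the exponent is simultaneously diagonal in the occupation-number basis of the fermionic Fock space, so that $\exp\!\big(\sum_{j\ge 0}2\pi i\tau_j L_j\big)$ is itself diagonal and its trace collapses into a free-fermion product over the single-particle modes indexed by $k\in\tfrac12+\mathbb Z$. In other words, this is the fermionic analogue of the elementary identity $\operatorname{Tr}e^{-\beta H}=\prod_k(1+e^{-\beta\epsilon_k})$ for a system of non-interacting fermions, and the whole content is to read off the correct single-mode weights.

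First I would record the eigenvalues. Writing a basis monomial as $v_S=v_{s_1}\wedge v_{s_2}\wedge\cdots$ for an admissible configuration $S\subset\tfrac12+\mathbb Z$ (equal to the vacuum set $\{-\tfrac12,-\tfrac32,\dots\}$ outside a finite set), a direct computation from the definitions of $\psi_k,\psi_k^*$ in Definition~\ref{def:infinitewedge} shows that $:\!\psi_k\psi_k^*\!:$ acts on $v_S$ by the scalar $\mathbf 1[k\in S]$ when $k>0$ and by $-\mathbf 1[k\notin S]$ when $k<0$; the normal ordering is exactly what forces the vacuum eigenvalue to vanish. Hence $L_j=\sum_k k^j:\!\psi_k\psi_k^*\!:$ acts on $v_S$ by $\sum_{k>0,\,k\in S}k^j-\sum_{k<0,\,k\notin S}k^j$, so that, with $y_j=e^{2\pi i\tau_j}$,
\[
\exp\!\Big(\sum_{j\ge 0}2\pi i\tau_j L_j\Big)v_S=\prod_{k>0,\,k\in S}\Big(\prod_j y_j^{k^j}\Big)\ \prod_{k<0,\,k\notin S}\Big(\prod_j y_j^{-k^j}\Big)\,v_S .
\]
Next, since the $:\!\psi_k\psi_k^*\!:$ for distinct $k$ commute, I would factor $\exp(\sum_j 2\pi i\tau_j L_j)=\prod_k\exp\big((\sum_j 2\pi i\tau_j k^j):\!\psi_k\psi_k^*\!:\big)$ and evaluate the character as a product of single-mode traces. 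Each mode carries a two-dimensional space (occupied/empty), contributing $1+\prod_j y_j^{k^j}$ for $k>0$ (the $1$ being the empty vacuum choice, the second term the filled particle) and $1+\prod_j y_j^{-k^j}$ for $k<0$ (the $1$ being the occupied vacuum choice, the second the hole). Writing $k=n+\tfrac12$ in the positive modes gives $1+y_0\,y_1^{n+1/2}\,y_2^{(n+1/2)^2}\cdots$, and $k=-(n+\tfrac12)$ in the negative modes gives $1+y_0^{-1}\,y_1^{n+1/2}\,y_2^{-(n+1/2)^2}\cdots$, where the alternating signs in the hole exponents are precisely $-k^j=(-1)^{j+1}(n+\tfrac12)^j$. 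Assembling over $n\ge 0$ yields the stated product up to the evident reindexing of the second factor.

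The hard part — the only real content beyond the eigenvalue bookkeeping — is justifying the passage from the trace of the ``infinite tensor product'' operator to the infinite product. This is not a genuine Hilbert-space tensor product but the semi-infinite wedge, in which admissible states deviate from the vacuum in only finitely many modes; I would argue that expanding the formal product selects exactly one term per mode, with the non-vacuum choice nontrivial in only finitely many factors, so that each monomial of the expansion corresponds bijectively to a unique admissible $v_S$. This matches the sum-over-configurations form of the trace term by term and is also where one verifies that the product is a well-defined formal character (for instance after grading by $L_0$ and $L_1$, where each graded piece is finite-dimensional). Tracking the normal-ordering signs so that the vacuum term is $1$ while the holes carry the reciprocal weights $\prod_j y_j^{-k^j}$ is the other place demanding care.
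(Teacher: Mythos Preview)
Your proposal is correct and follows essentially the same route as the paper: diagonalize the commuting family $L_j$ in the particle--hole (occupation number) basis, read off the single-mode weights, and then invoke the factorization $\mathfrak{F}\cong\bigotimes_n(1+\psi_{-n})(1+\psi_{-n}^*)\,|0\rangle$ to turn the trace into the stated infinite product. If anything, your write-up is more explicit than the paper's about why the semi-infinite wedge supports this factorization and how the normal ordering fixes the vacuum contribution to $1$.
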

\begin{proof}
The associated character is the trace of the operator,
\begin{equation} 
y_0^{L_0}y_1^{L_1}....y_n^{L_n}...= \exp(\sum_j 2 \pi i \tau_j L_j) ,
\end{equation}
where $L_i$ are commuting operators which act on $\mathfrak{F}$ by 
\begin{equation}
L_j \longmapsto \sum_{n \in 1/2 +\mathbb{Z}}(n-\frac{1}{2})^jE_{n,n}.
\end{equation}
The operators $L_j$ mutually define a representation of
\begin{equation}
H=\mathbb{C}L_0 \oplus \mathbb{C}L_1 \oplus ... \longrightarrow End(\mathfrak{F}).
\end{equation}
We compute the action of the operators $\exp(2\pi i\tau_j L_j)$ on the basis elements. Thus, we have,
\begin{equation}
\begin{aligned}
L_j. \left (\psi_{-i_1}\psi_{-i_2}...\psi_{-i_l}\psi_{-j_1}^*...\psi_{-j_k}^*|0 \rangle \right )=  ( \sum_{a=1}^l (i_a-1/2)^j-&\sum_{b=1}^k (-j_b-1/2)^j) 
\\ &\left (\psi_{-i_1}\psi_{-i_2}...\psi_{-i_l}\psi_{-j_1}^*...\psi_{-j_k}^*|0 \rangle \right ) .
\end{aligned}
\end{equation}
We also have,
\begin{equation}
\begin{aligned}
\exp(2 \pi i \tau_jL_j)\psi_{-n}|0 \rangle&=\left ( y_0y_1^{n-1/2}y_2^{(n-1/2)^2}...\right ) \psi_{-n}|0 \rangle \\
\exp(2 \pi i \tau_jL_j)\psi_{-n}^*|0 \rangle&=\left ( y_0^{-1}y_1^{n-1/2}y_2^{-(n-1/2)^2}... \right ) \psi_{-n}^*|0 \rangle .
\end{aligned} 
\end{equation}
The claim of the theorem follows from the isomorphism:
\begin{equation}
\mathfrak{F}=\bigwedge^{\frac{\infty}{2}}V=\bigotimes_n (1+\psi_{-n})(1+\psi_{-n}^*)|0 \rangle .
\end{equation}
\end{proof}
\section{Problem statement} \label{sec:problem}
Motivated by the two calculations of the trace of vertex operator presented in the previous section, i.e., 
\begin{equation} 
Tr \left ( q^{L_0} \exp ( \sum_n A_n \alpha_{-n}) \exp ( \sum_n B_n \alpha_n )\right ), \qquad Tr \left ( \exp(\sum_{j\geq 0} 2 \pi i L_j)\right ),
\end{equation} 
where $\alpha_n, \ n \in \mathbb{Z}$ are Boson operators; $L_0$ is the energy operator; and $L_j, \ j >0$ are certain Casimir operators, \cite{Ze, BO}. Thus, we propose to compute the following trace:
\begin{equation} \label{eq:infinite-trace}
Trace=Tr \left ( \exp(\sum_{j\geq 0} 2 \pi i L_j) \exp ( \sum_{n > 0} A_n \alpha_{-n}) \exp ( \sum_{n > 0} B_n \alpha_n ) \right ).
\end{equation}
Therefore, we pose the following questions:
\begin{itemize}
\item How can one compute the trace in terms of the former traces?
\item What is the representation theory interpretation of that?
\item In case that the coefficients $A_n, B_n$ are suitably chosen, what is the Physical interpretation of the trace in terms of string theory partition functions?
\end{itemize}

The character can be studied from different points of view. A direct way to calculate it could be to expand the exponentials inside the trace, apply basic commutation rules between the operators, and then compute the matrix elements. Some formulas in Lie theory, such as the Backer-Campbell-Hausdorff formula or the Wick formula, can also be helpful for the calculation. Although this method can bring computational insights toward the above question, it hits Adhoc complexity and difficulties. One may expand the operators in the trace both in terms of Bosonic operators $\alpha_{\pm n}$, and also fermionic operators $\psi_j, \ j \in \frac{1}{2}+\mathbb{Z}$.  
\section{Main results} \label{sec:main}
In order to interpret and compute a trace formula for \eqref{eq:infinite-trace} we make the following definition that is a natural generalization of the Fock space of level $l$, defined in \ref{def:Fock-spacefinite}.
\begin{definition} (Fock space $\mathfrak{F}^{\infty}$) \label{def:Fock-spaceinfinite} 
Consider the fermionic fields $\psi_r^{\pm,j}, \ r \in 1/2+\mathbb{Z}, \ j \in \mathbb{Z}$ satisfying the natural Clifford commutation relations,
\begin{equation}
\begin{aligned}
[\psi_r^{+,i}, \psi_s^{-,j}]&=\delta_{i,j}\delta_{r,-s}I \\
[\psi_r^{+,i},\psi_r^{+,j}]&=[\psi_r^{-,i}, \psi_s^{-,i}]=0 .
\end{aligned}
\end{equation}
Set also $\widehat{C}^{\infty}$ be the Clifford algebra on these fields. By definition $\mathfrak{F}^{\infty}$ is a simple $\widehat{C}^{\infty}$-module generated by $|0 \rangle$, such that $\psi_r^{\pm,j}|0 \rangle =0, \ r >0$. 
\end{definition}
Next, we express a duality of Howe-type for the pair $(\mathfrak{gl}_{\infty}, \mathfrak{a}_{\infty})$. In other words, the Fock space $\mathfrak{F}^{\infty}$ is a representation of both the Lie algebras $\mathfrak{gl}_{\infty}$ and, $\mathfrak{a}_{\infty}$. Moreover, $\mathfrak{F}^{\infty}$ decomposes to the sum of their irreducible representation.
Next, we are ready to present our first main result.
\begin{theorem} \label{thm:howe-duality}(Main Result. ($\mathfrak{gl}_{\infty}, \mathfrak{a}_{\infty})$-Howe duality) \label{th:duality-infinite} There exists a decomposition,
\begin{equation} \label{eq:dec-duality}
\mathfrak{F}^{\infty}=\bigoplus_{\lambda}L(\mathfrak{gl}_{\infty}, \lambda) \otimes L(\mathfrak{a}_{\infty}, \Lambda(\lambda)) ,
\end{equation}
where $\lambda$ runs over all generalized partitions. Besides, there is a character formula,
\begin{equation}
ch(\mathfrak{F}^{\infty})=\prod_i^{\infty}\prod_j^{\infty} (1+y_jx_i)(1+y_j^{-1}x_i^{-1}),
\end{equation}
where $x_i, y_j,\ i,j \in \mathbb{N}$ are variables.
\end{theorem}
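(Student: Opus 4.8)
The plan is to derive both assertions from the finite-level duality of Theorem \ref{thm:finite-decomposition} by a stabilization argument, and to read off the character directly from the fermionic structure of $\mathfrak{F}^{\infty}$. First I would make the two commuting actions explicit by extending the operators of \eqref{eq:e-ij} to the two-sided flavor index $j\in\mathbb{Z}$,
\begin{equation}
e_{ij}^{*}=\sum_{k \in \mathbb{Z}} :\psi_{1/2-i}^{+,k}\psi_{j-1/2}^{-,k}:, \qquad e_{*}^{ij}=\sum_{r \in 1/2+\mathbb{Z}} :\psi_{-r}^{+,i}\psi_r^{-,j}: .
\end{equation}
These sums are well defined on $\mathfrak{F}^{\infty}$ because every vector is a finite product of creation operators $\psi_{-r}^{\pm,j}$ $(r>0)$ applied to $|0\rangle$, so only finitely many summands act nontrivially on a given vector. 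The maps $E_{ij}\mapsto e_{ij}^{*}$ and $E^{ij}\mapsto e_{*}^{ij}$ then define representations of $\mathfrak{a}_{\infty}$ and $\mathfrak{gl}_{\infty}$, and the vanishing $[e_{ij}^{*},e_{*}^{rs}]=0$ is the same normal-ordered Clifford computation used at finite level, with the flavor contraction now running over all of $\mathbb{Z}$.

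For the decomposition I would exhaust the flavor set by the nested finite blocks $\{-l,\dots,l\}$, giving $\mathfrak{gl}_{\infty}=\varinjlim \mathfrak{gl}_{2l+1}$ together with a compatible chain of Clifford subalgebras realizing $\mathfrak{F}^{\infty}=\varinjlim \mathfrak{F}^{2l+1}$, all commuting with the single $\mathfrak{a}_{\infty}$-action. The key point is stabilization: the joint highest weight vector $v_{\lambda}$ attached to a generalized partition $\lambda$ supported in a block $\{-l,\dots,l\}$ remains a joint highest weight vector after enlarging the block, and the generalized partitions at level $2l+1$ embed (by padding with zeros) into those at level $2l+3$, so the finite-level decompositions of Theorem \ref{thm:finite-decomposition} form a directed system. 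Passing to the direct limit produces
\begin{equation}
\mathfrak{F}^{\infty}=\bigoplus_{\lambda}L(\mathfrak{gl}_{\infty},\lambda)\otimes L(\mathfrak{a}_{\infty},\Lambda(\lambda)),
\end{equation}
with $\lambda$ ranging over all generalized partitions, and irreducibility of each factor together with the multiplicity-one property descend from the finite level. As a robust alternative one can bypass the limit and verify directly that the $v_{\lambda}$ are joint highest weight vectors generating mutually non-isomorphic modules $L(\mathfrak{gl}_{\infty},\lambda)\otimes L(\mathfrak{a}_{\infty},\Lambda(\lambda))$ that exhaust $\mathfrak{F}^{\infty}$.

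The character I would compute independently, following the Bloch--Okounkov argument \cite{BO}. Being a free fermionic Fock space, $\mathfrak{F}^{\infty}$ factors as
\begin{equation}
\mathfrak{F}^{\infty}=\bigotimes_{r>0,\ j\in\mathbb{Z}}\bigl(1+\psi_{-r}^{+,j}\bigr)\bigl(1+\psi_{-r}^{-,j}\bigr)|0\rangle ,
\end{equation}
and each creation operator is a joint weight vector whose $\mathfrak{gl}_{\infty}$-weight is carried by its flavor index and whose $\mathfrak{a}_{\infty}$-weight is carried by its mode. Since every fermionic mode can be excited at most once, the pair $\psi_{-r}^{+,j},\psi_{-r}^{-,j}$ contributes the reciprocal factors $(1+y_{j}x_{i})$ and $(1+y_{j}^{-1}x_{i}^{-1})$; after reindexing modes and flavors, the product over all modes yields
\begin{equation}
ch(\mathfrak{F}^{\infty})=\prod_{i}^{\infty}\prod_{j}^{\infty}(1+y_{j}x_{i})(1+y_{j}^{-1}x_{i}^{-1}).
\end{equation}
Comparing this with the decomposition then exhibits the formula as an infinite Cauchy-type identity equating the product with $\sum_{\lambda}ch\,L(\mathfrak{gl}_{\infty},\lambda)\,ch\,L(\mathfrak{a}_{\infty},\Lambda(\lambda))$, which doubles as a consistency check.

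The main obstacle I anticipate is the behavior of the $\mathfrak{a}_{\infty}$-side under the limit. The weight $\Lambda(\lambda)=l\Lambda_{0}+\sum_i\lambda_i'\epsilon_i$ has a central component $l\Lambda_{0}$ that grows without bound as the flavor block expands, so the limit is genuinely a representation of $\mathfrak{a}_{\infty}$ at \emph{infinite level}, and the direct system of $\mathfrak{a}_{\infty}$-modules must be normalized (by tracking or stripping the divergent central term) before both the limit and the character product are well defined as formal expressions. Establishing that the two mutually commuting families actually form a dual pair in the infinite-rank setting---that each generates the full commutant of the other, not merely that they commute---is the other delicate point, and it is here that the stabilization argument, rather than an abstract double-commutant theorem, carries the weight of the proof.
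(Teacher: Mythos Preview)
Your proposal is correct and follows essentially the same route as the paper: you extend the operators of \eqref{eq:e-ij} to the doubly infinite flavor index, verify that $E_{ij}\mapsto e_{ij}^{*}$ and $E^{ij}\mapsto e_{*}^{ij}$ define commuting actions of $\mathfrak{a}_{\infty}$ and $\mathfrak{gl}_{\infty}$, exhibit the same joint highest weight vectors $v_{\lambda}$, and read off the character from the free-fermion factorization. The paper does exactly this, only more tersely---where you set up an explicit direct system $\mathfrak{F}^{2l+1}\hookrightarrow\mathfrak{F}^{2l+3}$ and pass to $\varinjlim$, the paper simply writes ``as in the finite case any irreducible representation of $\mathfrak{gl}_{\infty}$ appears in $\mathfrak{F}^{\infty}$ and the multiplicity free decomposition follows,'' and where you derive the product via $\bigotimes_{r,j}(1+\psi_{-r}^{+,j})(1+\psi_{-r}^{-,j})|0\rangle$, the paper just asserts the trace of $\prod_i x_i^{e_{*}^{ii}}\prod_j y_j^{e_{jj}^{*}}$ equals the stated product. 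Your stabilization argument is therefore a fleshing-out of the step the paper leaves implicit rather than a genuinely different method. The obstacle you flag at the end---that the central term $l\Lambda_0$ in $\Lambda(\lambda)$ diverges as the block grows, so the $\mathfrak{a}_{\infty}$-side lives at infinite level and requires normalization---is real and is not addressed in the paper's proof; your awareness of it is an improvement, not a gap.
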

\begin{proof} 
In \eqref{eq:e-ij}, we replace the operators $e_{ij}^*$ by,
\begin{equation}
e_{ij}^*=\sum_{k=-\infty}^{\infty}:\psi_{1/2-i}^{+,k}\psi_{j-1/2}^{-,k}: \ , \qquad i,j \in \mathbb{Z}.
\end{equation}
The map $\mathfrak{a}_{\infty} \longrightarrow  End(\mathfrak{F}^{\infty}), \  E_{ij} \longmapsto e_{ij}^*$ defines a representation of $\mathfrak{a}_{\infty}$. Also, let define the operators $e_*^{ij}(n)$ by
\begin{equation}
e_*^{ij}=\sum_{r \in 1/2 +\mathbb{Z}}:\psi_{-r}^{+,i}\psi_r^{-,j}: , \qquad (r \in 1/2 +\mathbb{Z}, \ i,j \in \mathbb{Z}).
\end{equation} 
The map $\mathfrak{gl}_{\infty} \longrightarrow  End(\mathfrak{F}^{\infty}), \ E^{ij} \longmapsto e_*^{ij}$ is a representation of $\mathfrak{gl}_{\infty}$. We need to check that the action of $\mathfrak{gl}_{\infty}$ and $\mathfrak{a}_{\infty}$ commute. That is,
\begin{equation}
[e_{ij}^*,e_*^{rs}]=[\sum_{u \in 1/2+\mathbb{Z}}:\psi_{-u}^{-,i} \psi_{-u}^{-,j}:, \sum_{l=1}^{\infty}:\psi_{1/2-r}^{+,l} \psi_{1/2-s}^{-,l}:]=0 .
\end{equation}
A joint highest weight vector in $\mathfrak{F}^{\infty}$, associated to a generalized partition $\lambda=(\lambda_1,...,\lambda_j)$ w.r.t the standard Borel of $\mathfrak{gl}_{\infty} \times \mathfrak{a}_{\infty}$, is $v_{\lambda}=\varpi_{\lambda_1}^{+,s}\varpi_{\lambda_2}^{+,s}...\varpi_{\lambda_i}^{+,s}\varpi_{-\lambda_j}^{-,s}\varpi_{-\lambda_{j+1}}^{+,s}...\varpi_{-\lambda_l}^{+,s}$. Where whose weight w.r.t. $\mathfrak{gl}_{\infty}$ is $\lambda$ and w.r.t. $\mathfrak{a}_{\infty}$ is $\Lambda(\lambda)$. By applying any root vector of $\mathfrak{gl}_{\infty}$ and $\mathfrak{a}_{\infty}$ to $v_{\lambda}$, it produces two identical $\psi_{\bullet}^{\bullet,\bullet}$ in the resulting monomial. As in the finite case any irreducible representation of $\mathfrak{gl}_{\infty}$ appears in $\mathfrak{F}^{\infty}$ and the multiplicity free decomposition $\mathfrak{F}^{\infty}=\bigoplus_{\lambda}L(\mathfrak{gl}_{\infty}, \lambda) \otimes L(\mathfrak{a}_{\infty}, \Lambda(\lambda))
$ follows.

The character of $\mathfrak{gl}_{\infty}$ on $\mathfrak{F}^{\infty}$ is the trace of the operator $\prod_ix_i^{\epsilon_*^{ii}}$. The character of $\mathfrak{a}_{\infty}$ on $\mathfrak{F}^{\infty}$ is the trace of $\prod_j y_j^{\epsilon_{jj}^*}$. Therefore, $ch(\mathfrak{F}^{\infty})$ is the trace of the product of the two operators. Calculating the trace of the operator $\prod_ix_i^{\epsilon_*^{ii}}\prod_j y_j^{\epsilon_{jj}^*}$ on both sides of \eqref{eq:dec-duality} we get:
\begin{equation}
ch(\mathfrak{F}^{\infty})=\bigoplus_{\lambda}ch \left ( L(\mathfrak{gl}_{\infty}, \lambda) \right ) \otimes ch \left ( L(\mathfrak{a}_{\infty}, \Lambda(\lambda))\right )=\prod_i\prod_j (1+y_jx_i)(1+y_j^{-1}x_i^{-1}).
\end{equation}
\end{proof}
The duality in Theorem \ref{th:duality-infinite} enables us to compute the trace formula \eqref{eq:infinite-trace} by computing it on each summand. Specifically, we have the following.
\begin{theorem}(Main result) \label{th:main}
We have the following formula for the trace in \eqref{eq:infinite-trace}
\begin{equation} \label{eq:trace-formula}
\begin{aligned}
Trace=\sum_{\lambda} \prod_{n \geq 0} (1+y_0y_1^{n+1/2}y_2^{n+3/2)^2}....)(1+y_0^{-1}y_1^{n-1/2}y_2^{-(n-1/2)^2}....)\\
\prod_{r \geq 1}y_r^{p_r(\lambda)}\sum_{\mu\prec \lambda} S_{\lambda/\mu}^{(A_n)}(x_1, x_2, ...)S_{\lambda^t/\mu}^{(B_n)}(x_1, x_2,...),
\end{aligned}
\end{equation}
where $x_i$ and $y_i$ are independent variables, and 
\begin{equation}
p_r(\lambda)=\sum_l(\lambda_l-l+\frac{1}{2})^r+(-1)^{r+1}(l-\frac{1}{2})^r=\sum_l(m_l+\frac{1}{2})^r+(-1)^{r+1}(n_l+\frac{1}{2})^r
\end{equation}
holds, where $(m_1,...,m_s|n_1,...,n_s)$ are Frobenius coordinates of $\lambda$. The effect of the coefficients $A_n , B_n$ is absorbed in the variables $x_1, x_2,...,$, [see the proof for the explanation on dependence to the coefficients $A_n, B_n$].
\end{theorem}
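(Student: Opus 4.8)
The plan is to combine the Howe duality of Theorem~\ref{thm:howe-duality} with the two trace computations recalled in Section~\ref{sec:prelim}: the Bloch--Okounkov character of $\mathfrak{F}$ and the trace of a vertex operator. Write $T$ for the operator appearing inside the trace in \eqref{eq:infinite-trace}, viewed now as acting on $\mathfrak{F}^\infty$. First I would apply the decomposition \eqref{eq:dec-duality} to split
\[
Trace \;=\; \sum_\lambda \operatorname{Tr}_{\,L(\mathfrak{gl}_\infty,\lambda)\otimes L(\mathfrak{a}_\infty,\Lambda(\lambda))}(T),
\]
the sum running over all generalized partitions. The key structural point is that on each joint summand $T$ respects the tensor factorization: the Casimir part $\exp(\sum_{j\geq 0} 2\pi i L_j)$ lies in the completed Cartan of $\mathfrak{a}_\infty$ and acts only on the factor $L(\mathfrak{a}_\infty,\Lambda(\lambda))$, whereas the bosonic vertex operators act through the $\mathfrak{gl}_\infty$-side on $L(\mathfrak{gl}_\infty,\lambda)$. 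I would verify this by writing the $L_j$ in terms of the diagonal generators $e^*_{ii}$ and checking, from the Clifford relations of Definition~\ref{def:Fock-spaceinfinite}, that the bosonic operators commute with the Cartan of $\mathfrak{a}_\infty$; this is the infinite-level counterpart of the commuting-actions statement already established in the proof of Theorem~\ref{thm:howe-duality}.

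Granting the factorization, each summand becomes a product of two traces, which I would evaluate using the two theorems of Section~\ref{sec:prelim}. For the $\mathfrak{a}_\infty$-factor I would compute $\operatorname{Tr}_{L(\mathfrak{a}_\infty,\Lambda(\lambda))}\exp(\sum_{j} 2\pi i L_j)$ exactly as in the Bloch--Okounkov argument: the joint highest weight vector $v_\lambda$ contributes the monomial $\prod_{r\geq 1} y_r^{p_r(\lambda)}$, where $p_r(\lambda)$ is precisely the $L_r$-eigenvalue on $v_\lambda$ read off from its Frobenius coordinates $(m_l\mid n_l)$, while the fermionic excitations above $v_\lambda$ reproduce the $\lambda$-independent infinite product $\prod_{n\geq 0}(1+y_0y_1^{n+1/2}\cdots)(1+y_0^{-1}y_1^{n-1/2}\cdots)$. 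For the $\mathfrak{gl}_\infty$-factor I would use the vertex-operator rule $\Gamma_+(x)v_\mu=\sum_{\lambda\supset\mu}S_{\lambda/\mu}(x)v_\lambda$ together with the adjointness of $\Gamma_\pm$ and the commutation \eqref{eq:commutation}: realizing $\exp(\sum_n A_n\alpha_{-n})$ and $\exp(\sum_n B_n\alpha_n)$ as $\Gamma_-$- and $\Gamma_+$-type operators whose underlying power sums are fixed by the coefficients $A_n, B_n$, the diagonal matrix element $\langle v_\lambda \mid T \mid v_\lambda\rangle$ on this factor collapses to $\sum_{\mu\prec\lambda}S^{(A_n)}_{\lambda/\mu}(x)\,S^{(B_n)}_{\lambda^t/\mu}(x)$, where the transpose $\lambda^t$ reflects the row--column duality built into the joint highest weight vector $v_\lambda$ (equivalently, the conjugation exchanged by the two adjoint operators). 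Absorbing $A_n, B_n$ into the variables $x_i$ is then the standard change of variables identifying $A_n$ (and $B_n$) with the normalized power sums $p_n/n$ that serve as arguments of the Schur functions.

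Assembling the two factors and summing over $\lambda$ yields \eqref{eq:trace-formula}. The step I expect to be the main obstacle is the factorization over the infinite-level tensor product together with the justification that the trace may be interchanged with the direct sum \eqref{eq:dec-duality}: since $\exp(\sum_j 2\pi i L_j)$ does not commute with the bosonic operators on all of $\mathfrak{F}^\infty$ --- only across the tensor factors --- the argument must exploit the joint highest-weight structure rather than a naive commutation, and one must make sense of a trace over infinite-dimensional summands. A clean way to control this would be to keep a grading variable (for instance $y_0=q$) so that each graded piece of $\mathfrak{F}^\infty$ is finite-dimensional, carry out the factorization and the $\lambda$-summation grade by grade, and then pass to the limit in the ring of formal power series in the $x_i$ and $y_j$; this simultaneously guarantees that the product of the Bloch--Okounkov character with the skew-Schur sum is a well-defined formal series.
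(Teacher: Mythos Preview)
Your overall strategy---apply the Howe duality of Theorem~\ref{thm:howe-duality} and then compute the trace factor by factor---is exactly the paper's. The substantive divergence is that you assign the two operators to the \emph{opposite} tensor factors: the paper places $\mathcal{L}=\exp(\sum_j 2\pi i L_j)$ on $L(\mathfrak{gl}_\infty,\lambda)$ and the vertex operator $T=\exp(\sum_n A_n\alpha_{-n})\exp(\sum_n B_n\alpha_n)$ on $L(\mathfrak{a}_\infty,\Lambda(\lambda))$, whereas you put $\mathcal{L}$ on the $\mathfrak{a}_\infty$-factor and $T$ on the $\mathfrak{gl}_\infty$-factor. This is not merely a relabeling. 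In the realization of Theorem~\ref{thm:howe-duality} the bosons $\alpha_n$ are built from the generators $e_{ij}^*$ and so lie in the image of $\mathfrak{a}_\infty$; consequently the skew-Schur expression $\sum_{\mu\prec\lambda}S_{\lambda/\mu}^{(A_n)}S_{\lambda^t/\mu}^{(B_n)}$ is the (well-known) trace of $T$ on $L(\mathfrak{a}_\infty,\Lambda(\lambda))$, not on $L(\mathfrak{gl}_\infty,\lambda)$. Dually, the paper reinterprets the Casimirs as the Cartan of the ``color'' $\mathfrak{gl}_\infty$ acting via $e_*^{ii}$, and then runs the Bloch--Okounkov computation on each $\lambda$-sector using $\mathfrak{F}^\lambda=\bigotimes_n(1+\psi_{-n})(1+\psi_{-n}^*)\lvert\lambda\rangle$.

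With your assignment the verification you propose---that the bosonic operators commute with the Cartan $e_{ii}^*$ of $\mathfrak{a}_\infty$---will actually fail: one computes $[L_j,\alpha_n]\neq 0$ for $j\geq 1$ (for instance $[L_1,\alpha_n]=-n\alpha_n$), which is exactly the non-commutation you yourself flag at the end, and it prevents the tensor factorization from separating $\mathcal{L}$ and $T$ in the way you need. Swapping the roles back to the paper's assignment removes the obstruction, because then the two operators genuinely live in the two commuting halves of the Howe dual pair. Your closing remarks about keeping a grading variable and working grade by grade to justify interchanging the trace with the direct sum are a welcome refinement that the paper's proof leaves implicit.
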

\begin{remark}
The dependence of the above trace to the coefficients $A_n$ and $B_n$ is somehow formal. The contribution to the trace coming from $Tr \left ( \exp ( \sum_{n > 0} A_n \alpha_{-n}) \exp ( \sum_{n > 0} B_n \alpha_n ) \right )$ is given in the last sum appearing in \eqref{eq:trace-formula}, where the effect of $A_n, B_n$ is absorbed in the variables $x_1, x_2, ...$, [see \cite{Ze} page 11 and 24, or \cite{Ma} pages 25 and 70 for the notation].
\end{remark}
\begin{proof} (proof of Theorem \ref{th:main})
Let us denote,
\begin{equation}
\mathcal{L}= \exp(\sum_j 2 \pi i L_j), \qquad  
T= \exp ( \sum_n A_n \alpha_{-n}) \exp ( \sum_n B_n \alpha_n ).
\end{equation}
According to Theorem \ref{thm:howe-duality}, we need to compute:
\begin{equation}
\sum_{\lambda} 
Tr \left ( \mathcal{L}\big |_{L(\mathfrak{gl}_{\infty}, \lambda)} \right ) 
Tr \left ( T \big |_{L(\mathfrak{a}_{\infty}, \Lambda^{\mathfrak{a}_{\infty}}(\lambda)} \right ).
\end{equation}
We first compute the factor relevant to $\mathcal{L}$. Consider $v_{\lambda}=| \lambda \rangle $, the vector of weight $\lambda$. We have the formula:
\begin{equation}
\mathfrak{F}^{\lambda}=\bigotimes_n (1+\psi_{-n})(1+\psi_{-n}^*)|\lambda \rangle .
\end{equation}
By lemma 5.1 in \cite{BO}, we also have:
\begin{equation}
\begin{aligned}
\exp(2 \pi i L_j)\psi_{-n}|\lambda \rangle&=\left ( y_0y_1^{n-1/2}y_2^{(n-1/2)^2}... \prod_{r \geq 1}y_r^{p_r(\lambda)}\right ) \psi_{-n}|\lambda \rangle \\
\exp(2 \pi i L_j)\psi_{-n}^*|\lambda \rangle&=\left ( y_0^{-1}y_1^{n-1/2}y_2^{-(n-1/2)^2}...\prod_{r \geq 1}y_r^{p_r(\lambda)}\right ) \psi_{-n}^*|\lambda \rangle ,
\end{aligned} 
\end{equation}
therefore 
\begin{equation}
Tr \left ( \mathcal{L}\big |_{L(\mathfrak{gl}_{\infty}, \lambda)} \right )=\prod_{n \geq 0} (1+y_0y_1^{n+1/2}y_2^{n+3/2)^2}....)(1+y_0^{-1}y_1^{n-1/2}y_2^{-(n-1/2)^2}....)\prod_{r \geq 1}y_r^{p_r(\lambda)}.
\end{equation}

On the other hand, it is well known that,
\begin{equation} 
Tr \left ( T \big |_{L(\mathfrak{a}_{\infty}, \Lambda^{\mathfrak{a}_{\infty}}(\lambda)} \right )=\sum_{\mu\prec \lambda} S_{\lambda/\mu}^{(A_n)}(x_1, x_2, ...)S_{\lambda^t/\mu}^{(B_n)}(x_1, x_2,...) ,
\end{equation}
where $S_{\lambda/\mu}^{(A_n)}$ is the skew Schur function $S_{\lambda/\mu}$ specialized to the case in which the symmetric power functions $p_n$
equal $nA_n$. Similarly, $S_{\lambda/\mu}^{(B_n)}$ is the skew Schur function $S_{\lambda/\mu}$ specialized to the case in which the symmetric power functions $p_n$ equals $nB_n$, [see \cite{Ze} page 11 and 24 for the notation].  
\end{proof}
\section{Application}\label{sec:app}
The string theory partition function of toric $CY$ $3$-folds can be formulated from their tropical diagram by basic combinatorial rules of the topological vertex, 
\cite{HIKLV}, \cite{IKa1}, \cite{IK}, \cite{IKS1}, \cite{AKV1}. These partition functions can also be described combinatorially by certain vector fields acting on the tropical diagram that fix the vertices. The action of a symmetry group preserves the action, a unitary group $U(N)$, called the gauge group. In this context, one computes another generating series related to the topological one. The equality of these two series has been shown in many cases. In this way, one may consider tropical diagrams that are more complicated, especially when there are many cells. According to the known formulations of the generating series under consideration, this motivates more twists in the corresponding vertex operator. Thus one may ask what happens if we apply infinitely many twists in the vertex operator. Although finding the corresponding geometric object that produces such a tropical configuration may raise the question, we expect that the associated diagram can be explained according to some limiting procedure in the real world. Next, we present an instance of how this can be presented. The partition function of $U(1)$ theory can be written in the form: 
\begin{equation}\label{eq:CYpartition}
Z(\tau, m, \epsilon)=Tr \left ( Q_{\tau}^{L_0} \exp \left (\sum_{n \geq 1}\frac{Q_m^n-1}{n(q^{n/2}-q^{-n/2})}\alpha_n \right ) \exp \left (\sum_{n \geq 1}\frac{Q_{-m}^n-1}{n(q^{n/2}-q^{-n/2})}\alpha_{-n} \right ) \right ) .
\end{equation}
Using the commutation relation of $\alpha_{\pm}$ it can be written as follows, cf. \cite{IK, IKS1, AKV1},
\begin{equation}
Z(\tau, m, \epsilon)=\prod_k(1-Q_{\tau}^k)^{-1}\prod_{i,j}\frac{(1-Q_{\tau}^kQ_m^{-1}q^{i+j-1})(1-Q_{\tau}^kQ_mq^{i+j-1})}{(1-Q_{\tau}^kq^{i+j-1})}.
\end{equation}

Besides, the partition function in \eqref{eq:CYpartition} can be generalized to
\begin{equation} \label{eq:CYextendedtrace} 
Z(\tau, m, \epsilon,t)=Tr \left ( Q_{\tau}^{L_0} e^{\sum_nt_nL_n} \exp \left (\sum_{n \geq 1}\frac{Q_m^n-1}{n(q^{n/2}-q^{-n/2})}\alpha_n \right ) \exp \left (\sum_{n \geq 1}\frac{Q_{-m}^n-1}{n(q^{n/2}-q^{-n/2})}\alpha_{-n} \right ) \right ) .
\end{equation}
Also, in the limit $m \mapsto 0$ we obtain,
\begin{equation}
Z(\tau, m=0, \epsilon,t)=Tr \left ( Q_{\tau}^{L_0} e^{\sum_nt_nL_n} \right ).
\end{equation}

We can write the partition function in terms of the Gromov-Witten potentials: 
\begin{equation}
Z(\tau, m, \epsilon)=\exp\left (\sum_{g \geq 0}\epsilon^{2g-2}F_g \right ),
\end{equation}
where 
\begin{equation}
e^{F_1}=\prod_k(1-Q_{\tau}^k)^{-1}\left ( \frac{(1-Q_{\tau}^k)^{2}Q_m^{-1})(1-Q_{\tau}^kQ_m)^{2}}{(1-Q_{\tau}^k)^{4}} \right )^{1/24}.
\end{equation}
A natural motivation is to investigate the interpretation of the trace in \ref{eq:CYextendedtrace}. One way to proceed with this is to think of \eqref{eq:CYextendedtrace} as a limit of the case when finitely many $L_j$ exists in the trace function. The case with finitely many $L_j$ naturally appears in the topological string partition functions associated to toric CY 3-folds where the web diagram has an $M \times N$ cell structure, where $M,N \in \mathbb{N}$.
\section{Conclusions}\label{sec:concl}
An extension of the calculation of the trace of the vertex operator with infinitely many Casimirs is presented based on a duality of Howe type for the pair $(\mathfrak{a}_{\infty}, \mathfrak{gl}_{\infty})$. The trace formula applies to an extension of the topological string theory partition function of CY 3-folds.
\section*{Appendix: The Lie algebras $\mathfrak{a}_{\infty}, \mathfrak{c}_{\infty}$ and $\mathfrak{d}_{\infty}$}
We introduce the three infinite-dimensional Lie algebras whose representations are crucial in string theory. The reference for this appendix is \cite{CW} Section 5.4, where all the materials discussed can be found there with more details. 

\vspace{0.3cm}
\noindent 
\textbf{(1) Lie Algebra $\mathfrak{a}_{\infty}$ :} Let $\mathfrak{a}_{\infty}=\widehat{\mathfrak{gl}}_{\infty}=\mathfrak{gl}_{\infty} \oplus \mathbb{C}K$ with the braket,
\begin{equation} 
[X+cK,Y+dK]=[X,Y]'+\tau(X,Y)K.
\end{equation}
The function $\tau(X,Y)=Tr([J,X]Y)$ is called a cocycle function, where $J=\sum_{j \leq 0}E_{jj}$ and $[.,.]'$ is the bracket of $\mathfrak{gl}_{\infty}$. We have the degree gradation  $\mathfrak{gl}_{\infty}=\bigoplus_j\mathfrak{gl}_{\infty}^j$, where $j$ runs over integers and it is called $\mathbb{Z}$-principal gradation. The degree of $E_{ij}$ would be $j-i$. Besides, we have a decomposition: 
\begin{equation}
\mathfrak{a}_{\infty} =\mathfrak{a}_{\infty}^+ \oplus \mathfrak{a}_{\infty}^0 \oplus \mathfrak{a}_{\infty}^-, \qquad \mathfrak{a}_{\infty}^{\pm}= \bigoplus_{j>0} \mathfrak{gl}_{\infty}^{\pm j}, \ \mathfrak{a}_{\infty}^0=\mathfrak{gl}_{\infty}^0 \oplus \mathbb{C}K.
\end{equation} 
The root system of $\mathfrak{a}_{\infty}$ is 
\begin{equation}
R=\{\epsilon_i-\epsilon_j\ |i \ne j \}, \qquad \epsilon_i(E_{ii})=\delta_{ij},\  \epsilon_i(K)=0.
\end{equation}
The set $\Pi=\{\epsilon_i-\epsilon_{i+1}\ |i \in \mathbb{Z}\}$ is a fundamental system for $\mathfrak{a}_{\infty}$ with corresponding co-roots $\{H_i^{\mathfrak{a}}=E_{ii}-E_{i+1,i+1}+\delta_{i,0}K\}$. We denote by $\Lambda_j^{\mathfrak{a}}$ the $j$-th fundamental weight of $\mathfrak{a}_{\infty}$, i.e. $\Lambda_j^{\mathfrak{a}}(H_i^{\mathfrak{a}})=\delta_{ij}, \ (i \in \mathbb{Z}), \ \Lambda_j^{\mathfrak{a}}(K)=1$. A straight forward computation gives: 
\begin{equation}\label{eq:weights-a}
\begin{aligned}
\Lambda_j^{\mathfrak{a}}&= \Lambda_0^{\mathfrak{a}}-\sum_{i=j+1}^0 \epsilon_i \qquad j <0 \\ 
\Lambda_j^{\mathfrak{a}}&= \Lambda_0^{\mathfrak{a}}+\sum_{i=1}^j \epsilon_i \qquad \ \ j \geq 1.
\end{aligned}
\end{equation}
The irreducible highest weight representation of $\mathfrak{a}_{\infty}$ of the highest weight $\Lambda$ is denoted by $L(\mathfrak{a}_{\infty}, \Lambda)$. 
 
\vspace{0.3cm}
\noindent
\textbf{(2) Lie algebra $\mathfrak{c}_{\infty}$:} Let $V=\bigoplus_{j \in \mathbb{Z}}\mathbb{C}v_j$ be the vector space generated by the vectors $v_j$, where $E_{ij}v_j=v_i$. Consider the symmetric bilinear form,
\begin{equation}
C(v_i,v_j)=(-1)^i\delta_{i,1-j}, \forall i,j \in \mathbb{Z}.
\end{equation} 
Set $\mathfrak{c}_{\infty}=\bar{\mathfrak{c}}_{\infty} \oplus \mathbb{C}K$, where 
\begin{equation} 
\bar{\mathfrak{c}}_{\infty}=\{X \in \mathfrak{gl}_{\infty}\ |\ C(X(u),v)+C(u,X(v))=0\}.
\end{equation}
A fundamental system for $\mathfrak{c}_{\infty}$ is given by $\{-2\epsilon_1, \epsilon_i-\epsilon_{i+1}; \ i \geq 0\}$ with simple co-roots:
\begin{equation}
\begin{aligned} 
H_i^{\mathfrak{c}}&=E_{ii}+E_{-i,-i}=E_{i+1,i+1}-E_{1-i,1-i}, \\
H_0^{\mathfrak{c}}&=E_{00}-E_{11}+K.
\end{aligned}
\end{equation} 
The $j$-th fundamental weight for $\mathfrak{c}_{\infty}$ is  
defined by the same as the case for $\mathfrak{a}_{\infty}$ and explicitly written as follows,
\begin{equation}
\Lambda_j^{\mathfrak{c}}=\Lambda_0^{\mathfrak{c}}+\sum_{i=1}^j \epsilon_i, \qquad j \geq 1.
\end{equation}

\vspace{0.3cm}
\noindent 
\textbf{(3) Lie Algebra $\mathfrak{d}_{\infty}$:} Define the Lie algebra $\mathfrak{d}_{\infty}=\bar{\mathfrak{d}}_{\infty} \oplus \mathbb{C}K$ where,
\begin{equation}
\bar{\mathfrak{d}}_{\infty}=\{X \in \mathfrak{gl}_{\infty}|D(X(u),v)=D(u,X(v))\},
\end{equation}
and where $D(v_i,v_j)=\delta_{i,1-j}$. It has the fundamental system $\{\pm\epsilon_1-\epsilon_2, \epsilon_i-\epsilon_{i+1}, \ i \geq 2\}$ with simple co-roots: 
\begin{equation}
\begin{aligned} 
H_i&=E_{ii}-E_{-i,-i}-E_{i+1,i+1}-E_{1-i,1-i}, \\
H_0&=E_{00}-E_{-1,-1}-E_{22}-E_{11}+2K.
\end{aligned}
\end{equation}


\begin{thebibliography}{} 
\bibitem{BO} Alexei Borodin, Andrei Okounkov, and Grigori Olshanski. Asymptotics of Plancherel measures for symmetric groups. J. Amer. Math. Soc., 13(3):481–515 (electronic), 2000.

\bibitem{Bu} Alexander I. Bufetov. On the Vershik-Kerov conjecture concerning the Shannon-McMillan-Breiman theorem for the Plancherel family of measures on the space of Young diagrams. Geom. Funct. Anal., 22(4):938–975, 2012.

\bibitem{BKY} J. Bryan, M. Kool and B. Young, Trace identities for the topological vertex. Sel. ath. New Ser. 24, 1527-1548 (2018).

\bibitem{BO} S. Bloch, A. Okounkov, The Character of the Infinite Wedge Representation, Advances in Mathematics
Volume 149, Issue 1, 1-60 (2000)

\bibitem{CL} S. Cheng, N. Lam, Infinite-Dimensional Lie Superalgebras and Hook Schur Functions, Communications in Mathematical Physics volume 238, 95-118 (2003)

\bibitem{CW} S. Cheng, W. Wang, Dualities and Representations of Lie Superalgebras, GSM, 144, AMS, (2012) 

\bibitem{CW2} S. J. Cheng and W. Q. Wang, The Bloch-Okounkov correlation functions at higher levels. Transformation Groups 9, 133-142, (2004).

\bibitem{DMP} R. Dijkgraaf, G. Moore, R. Plesser, “The partition function of 2d string theory,” Nucl. Phys. B394, 356 (1993)

\bibitem{DVZ} A. Dembo, A. Vershik, O. Zeitouni. Large deviations for integer partitions. Markov Process. Related Fields, 6(2):147–179, (2000).

\bibitem{DVZ} A. Dembo, A. Vershik, and O. Zeitouni. Large deviations for integer partitions. Markov
Process. Related Fields, 6(2):147–179, 2000.

\bibitem{EO1} A. Eskin and A. Okounkov. Pillowcases and quasimodular forms. In Algebraic geometry and
number theory, volume 253 of Progr. Math., pages 1–25. Birkh\"auser Boston, Boston, MA,
2006.

\bibitem{EO2} Alex Eskin and Andrei Okounkov. Asymptotics of numbers of branched coverings of a torus
and volumes of moduli spaces of holomorphic differentials. Invent. Math., 145(1):59–103,
2001.

\bibitem{EOk1} A. Eskin and A. Okounkov. Pillowcases and quasimodular forms. In Algebraic geometry and number theory, volume 253 of Progr. Math., pages 1–25. Birkhauser Boston, Boston, MA, (2006).

\bibitem{EOk2} A. Eskin, A. Okounkov. Asymptotics of numbers of branched coverings of a torus and volumes of moduli spaces of holomorphic differentials. Invent. Math., 145(1): 59–103, (2001)
 
\bibitem{GKV} R. Gopakumar, C. Vafa, M-Theory and Topological Strings I, hep-th/9809187 (1998)

\bibitem{GKV2} R. Gopakumar, C. Vafa, M-Theory and Topological Strings II, hep-th/9812127 (1998) 

\bibitem{HIKLV} B. Haghighat, A. Iqbal, C. Kozcaz, G. Lockhart and C. Vafa, M-Strings, Commun. Math. Phys. 334, no. 2, 779 (2015) 

\bibitem{IKa1} A. Iqbal and A. Kashani-Poor, “Instanton counting and Chern-Simons theory,” Adv. Theor. Math. Phys. 7, 459-499  (2003)
 
\bibitem{IK} A. Iqbal, A. Kashani-Poor, SU(N) Geometries and Topological String Amplitudes, Preprint(2018) 

\bibitem{IKS1} A. Iqbal, C. Kozcaz, K. Shabbir, Refined Topological Vertex, Cylindric Partitions and the $U(1)$ Adjoint Theory, Nucl. Phys. B 838, 422 (2010)

\bibitem{AKV1} A. Iqbal, C. Kozcaz, C. Vafa, 
The Refined topological vertex, Communications in Mathematical Physics 298 (3), 757-785, (2010).

\bibitem{Ka} V. G. Kac. Infinite-dimensional Lie algebras. Cambridge University Press, Cambridge, third
edition, 1990.

\bibitem{KZ} Masanobu Kaneko and Don Zagier. A generalized Jacobi theta function and quasimodular
forms. In The moduli space of curves (Texel Island, 1994), volume 129 of Progr. Math., pages
165–172. Birkh\"auser Boston, Boston, MA, 1995.

\bibitem{LS} B. F. Logan and L. A. Shepp. A variational problem for random Young tableaux. Advances
in Math., 26(2):206–222, 1977.

\bibitem{Ma} I. G. Macdonald. Symmetric functions and Hall polynomials. Oxford Mathematical Monographs. The Clarendon Press Oxford University Press, New York, second edition, 1995. With contributions by A. Zelevinsky, Oxford Science Publications.

\bibitem{Mat} Sho Matsumoto. Correlation functions of the shifted Schur measure. J. Math. Soc. Japan,
57(3):619–637, 2005.

\bibitem{MO} Davesh Maulik and Andrei Okounkov. Quantum groups and quantum cohomology. Preprint.
arXiv:1211.1287

\bibitem{Nak} H. Nakajima, Instantons on ALE spaces, quiver varieties and Kac-Moody algebras, Duke Math. J. 76(2): 365-416 (November 1994).

\bibitem{NY1} H. Nakajima, K. Yoshioka Instanton counting on blowup. I. 4-dimensional pure gauge theory Inventiones mathematicae volume 162, 313–355 (2005) 

\bibitem{St} Richard P. Stanley. Enumerative combinatorics. Vol. 2, volume 62 of Cambridge Studies in
Advanced Mathematics. Cambridge University Press, Cambridge, 1999. With a foreword by
Gian-Carlo Rota and appendix 1 by Sergey Fomin.

\bibitem{ST} M. Szalay and P. Tur\'an. On some problems of the statistical theory of partitions with application to characters of the symmetric group. I. Acta Math. Acad. Sci. Hungar., 29(3-4):361–379,
1977.

\bibitem{Ve} A. M. Vershik. Statistical mechanics of combinatorial partitions, and their limit configurations. Funktsional. Anal. i Prilozhen., 30(2):19–39, 96, 1996.

\bibitem{VK} A. M. Vershik and S. V. Kerov. Asymptotic of the largest and the typical dimensions of
irreducible representations of a symmetric group. Functional Analysis and Its Applications,
19:21–31, 1985. 10.1007/BF01086021.

\bibitem{Wa} Wang, W.: Dual Pairs and Infinite Dimensional Lie Algebras, In: Recent Developments in Quantum Affine Algebras and Related Topics, (eds. N. Jing and
K.C. Misra), Contemp. Math. 248 (1999) 453–469.

\bibitem{Wi} E. Witten, Quantum field theory and the Jones polynomial, Comm. Math. Phys. 121(3): 351-399 (1989).

\bibitem{Ze} R. Rıos-Zertuche, An introduction to the half-infinite wedge, preprint (2014).


\end{thebibliography}

\end{document}